\def\l@subsection{\@tocline{2}{0pt}{2.5pc}{2.5pc}{}}
\def\chapter{\clearpage\thispagestyle{plain}\global\@topnum\z@ 
\@afterindenttrue \secdef\@chapter\@schapter}
\newtheorem{thmgl} {Theorem}    
\newtheorem{propgl}{Proposition}
\newtheorem{lemgl} {Lemma}
\newtheorem{cornn}{Corollary}
\theoremstyle{definition}
\newtheorem{remgl} {Remark}
\newtheorem{remsgl} [remgl]{Remarks}
\newtheorem{remnn}{Remark}
\newcommand{\mf}{\mathfrak}
\newcommand{\mc}{\mathcal}
\newcommand{\mb}{\mathbb}
\newcommand{\nts}{\negthinspace}     
\newcommand{\Nts}{\nts\nts}
\newcommand{\ncd}{\nts\cdot\nts}
\newcommand{\ov}{\overline}
\newcommand{\sm}{\setminus}         
\newcommand{\ot}{\otimes}           
\newcommand{\la}{\langle}
\newcommand{\ra}{\rangle}
\newcommand{\Hom}{{\rm Hom}}        
\newcommand{\End}{{\rm End}}
\newcommand{\Mat}{{\rm Mat}}
\newcommand{\Sym}{{\rm Sym}} 
\newcommand{\sgn}{{\rm sgn}}
\newcommand{\id}{{\rm id}}
\newcommand{\gr}{{\rm gr}}   
\newcommand{\g}{\mf{g}}
\let\ttie\t
\newcommand{\tie}[1]{{\let\t\ttie \ttie#1}}
\renewcommand{\t}{\mf{t}}  
\newcommand{\Lie}{{\rm Lie}}
\newcommand{\GL}{{\rm GL}}
\newcommand{\Ort}{{\rm O}}
\newcommand{\SO}{{\rm SO}}
\newcommand{\Sp}{{\rm Sp}} 
\newcommand{\sleq}{\mbox{\fontsize{2}{3}\selectfont $\leq $}}
\newcommand{\stimes}{\mbox{\fontsize{7}{6}\selectfont $\times $}}
\newcommand{\e}{\epsilon}
\newcommand{\rot}{\rotatebox}
\newcommand{\SpM}{{\rm SpM}}
\newcommand{\GSp}{{\rm GSp}}
\begin{document}

\title{The symplectic ideal and a double centraliser theorem}
\author{Rudolf Tange}

\begin{abstract}
We interpret a result of S.\ Oehms as a statement about the symplectic ideal. We use this result to prove a double centraliser theorem for the symplectic group acting on $\bigoplus_{r=0}^s\otimes^rV$, where $V$ is the natural module for the symplectic group. This result was obtained in characteristic zero by H.\ Weyl. Furthermore we use this to extend to arbitrary connected reductive groups $G$ with simply connected derived group the earlier result of the author that the algebra $K[G]^\g$ of infinitesimal invariants in the algebra of regular functions on $G$ is a unique factorisation domain.
\end{abstract}

\address{School of Mathematics, University of Southampton, Highfield, SO17 1BJ, UK}
\email{rtange@maths.soton.ac.uk} \maketitle

\section*{Introduction}
Throughout this note $k$ denotes an infinite field, $K$ denotes the algebraic closure of $k$, $n$ and $s$ are positive integers, $V$ is the vector space $k^n$ and $\Mat_n=\Mat_n(k)$ is the $k$-algebra of $n\times n$-matrices acting on $V$ as vector space endomorphisms. We denote the vector space $\bigoplus_{r=0}^s\otimes^rV$ by $T^{\sleq s}(V)$. A matrix $u\in\Mat_n$ acts on $\otimes^sV$ by $u(x_1\otimes\cdots\otimes x_s)=u(x_1)\otimes\cdots\otimes u(x_s)$. For a subset $X$ of $\Mat_n$ we denote by $\mc{E}^s(X)$ and $\mc{E}^{\sleq s}(X)$ the {\it enveloping algebra} of $X$ in $\End_k(\otimes^sV)$ and $\End_k(T^{\sleq s}(V))$ respectively, that is, the subalgebra generated by the endomorphisms corresponding to the elements of $X$. Let $A$ be an associative $k$-algebra. The {\it centraliser algebra}\footnote{also called {\it commutating algebra, commutator algebra} or {\it commuting algebra}.} of a subalgebra $B$ of $A$ is defined as the subalgebra of $A$ that consists of the elements of $A$ that commute with all elements of $B$. We say that the {\it double centraliser theorem} holds for a subalgebra $B$ of $A$ if $B$ is equal to the centraliser algebra of its centraliser algebra. We say that the double centraliser theorem holds for a subset $X$ of $\Mat_n$ acting on $\otimes^sV$ or $T^{\sleq s}(V)$ if it holds for the corresponding enveloping algebra.

Now assume that $k=K=\mb{C}$ and let $G$ be a subgroup of $\GL_n(\mb C)$. In \cite{Br} Brauer showed that, if the double centraliser theorem holds for $G$, one can determine defining equations for $\mc{E}^s(G)$ by first determining the centraliser algebra. Determining the centraliser algebra turned out to be equivalent to a version of the first fundamental theorem of invariant theory for $G$.\footnote{In the case $G=\GL_n$ the centraliser algebra was already known and therefore yielded another proof of this version of the first fundamental theorem for $\GL_n$. In characteristic zero one can deduce from this the general first fundamental theorem for $\GL_n$. See \cite{Wey} p.139.} His methods applied to any semisimple complex Lie subgroup $G$ of $\GL_n(\mb C)$ for which there is a first fundamental theorem of invariant theory, in particular to the symplectic group $\Sp_{2m}(\mb C)$. In these cases we have namely that all finite dimensional Lie group representations of $G$ are semisimple and the double centraliser theorem holds for semisimple subalgebras of $\Mat_n(\mb{C})$.

Let $I$ be the ideal of polynomial functions on $\Mat_{2m}$ that vanish on $\Sp_{2m}$. In \cite{Wey} Weyl showed that finding generators $g_1,\ldots,g_r$ of $I$ which have the property that any $f\in I$ can be expressed as $f=\sum_ia_ig_i$ with $\deg(a_ig_i)\leq\deg(f)$ for certain polynomial functions $a_1,\ldots,a_r$ on $\Mat_{2m}$, is equivalent to determining defining equations for $\mc{E}^{\sleq s}(\Sp_{2m})$, for $s$ arbitrary. Then he used Brauers method, which also applies to $T^{\sleq s}(V)$, to determine such generators of $I$.

For the classical groups the first fundamental theorem of invariant theory has been generalised to positive characteristic by De Concini and Procesi in \cite{DeCProc}, but the algebras $\mc{E}^s(G)$ and $\mc{E}^{\sleq s}(G)$, $G$ classical, are no longer semisimple for all $s$ in positive characteristic. In Section~\ref{s.symplecticideal} and \ref{s.doublecentraliser} of this note we will reverse Weyls procedure and show that we can easily derive the double centraliser theorem for $\Sp_{2m}$ acting on $T^{\sleq s}(V)$ once we have generators for the symplectic ideal with the abovementioned property. All this relies on work of Oehms, De Concini and Procesi and Berele.

In Section~\ref{s.liealginvs} we derive another consequence from the results on the symplectic ideal in Section~\ref{s.symplecticideal}. We show that for a connected reductive group $G$ with simply connected derived group, the algebra $K[G]^\g$ of infinitesimal invariants in the algebra of regular functions on $G$ is a unique factorisation domain.

\section{The symplectic ideal and the symplectic enveloping algebra}\label{s.symplecticideal}

We begin by considering the following property of a set of generators $g_1,\ldots,g_r$ of an ideal $I$ in the polynomial ring $k[x_1,\ldots,x_n]$:

{\it Any $f\in I$ can be expressed as

\begin{equation}
f=\sum_{i=1}^ra_ig_i\text{ with }\deg(a_ig_i)\leq\deg(f) \label{eq.idealgensprop}
\end{equation}
for certain $a_1,\ldots,a_r\in k[x_1,\ldots,x_n]$.}

This property is related to the associated graded ideal for which we now introduce some notation. Let $A$ be a commutative algebra over $k$ with a filtration $A_0\subseteq A_1\subseteq A_2\cdots$. The associated graded algebra is denoted by $\gr(A)$. If $x\in A_i\sm A_{i-1}$, then we put $\deg(x)=i$ and $\gr(x)=x+A_{i-1}\in\gr(A)^i=A_i/A_{i-1}$. For an ideal $I$ of $A$ we have $\gr(A/I)\cong \gr(A)/\gr(I)$; see \cite{Bou}, Chapter~3, \S~ 2.4. In particular, when $A$ is graded, we have $\gr(A/I)\cong A/\gr(I)$.

\begin{lemgl}\label{lem.idealgens}
Let $I$ be an ideal of the polynomial ring $k[x_1,\ldots,x_n]$ which is generated by the nonzero elements $g_1,\ldots,g_r$. Then these generators have property \eqref{eq.idealgensprop} if and only if the elements $\gr(g_1),\ldots,\gr(g_r)$ generate the ideal $\gr(I)$.
\end{lemgl}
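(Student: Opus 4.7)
The plan is to work with the standard ascending filtration on $A=k[x_1,\ldots,x_n]$ by total degree, under which $\gr(A)$ is canonically identified with $A$ itself by sending a polynomial to its top-degree homogeneous component. With this identification, $\gr(I)$ becomes the homogeneous ideal spanned as a $k$-vector space by the leading forms $\gr(f)$ of the nonzero elements $f\in I$. Both implications then reduce to careful bookkeeping with degrees; I do not foresee a genuine obstacle.

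For the forward direction, suppose property \eqref{eq.idealgensprop} holds and take a nonzero $f\in I$ of degree $d$. Writing $f=\sum_i a_ig_i$ with $\deg(a_ig_i)\leq d$ and extracting the degree-$d$ homogeneous part gives
\[
\gr(f)=\sum_{\deg(a_ig_i)=d}\gr(a_i)\,\gr(g_i),
\]
so $\gr(f)$ lies in the ideal of $\gr(A)$ generated by $\gr(g_1),\ldots,\gr(g_r)$. Since such leading forms span $\gr(I)$, this ideal equals $\gr(I)$.

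For the converse, assume the leading forms generate $\gr(I)$, and prove property \eqref{eq.idealgensprop} by induction on $d=\deg(f)$ for nonzero $f\in I$ (the case $f=0$ is trivial). Because $\gr(f)\in\gr(I)$ is homogeneous of degree $d$, one can write $\gr(f)=\sum_i b_i\,\gr(g_i)$ with each $b_i$ homogeneous of degree $d-\deg(g_i)$, taking $b_i=0$ whenever this would be negative; this is possible because each $\gr(g_i)$ is itself homogeneous. The polynomial $f-\sum_i b_ig_i$ then lies in $I$ and has degree strictly less than $d$, so by the inductive hypothesis it can be expressed as $\sum_i c_ig_i$ with $\deg(c_ig_i)<d$. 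Combining, $f=\sum_i(b_i+c_i)g_i$ with $\deg((b_i+c_i)g_i)\leq d$, as required.

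The only mildly subtle point is the homogeneous lifting of the relation $\gr(f)=\sum_i b_i\gr(g_i)$, which is automatic once one notes that the grading of $\gr(A)$ allows every relation among homogeneous elements to be refined to a homogeneous one; everything else is formal.
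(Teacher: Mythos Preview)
Your proof is correct and follows essentially the same approach as the paper's own argument: both directions are handled exactly as you do, by extracting the top-degree homogeneous part for the forward implication and by induction on degree for the converse. Your write-up is slightly more detailed (e.g.\ you make explicit the combination $f=\sum_i(b_i+c_i)g_i$ and the convention $b_i=0$ when the degree would be negative), but there is no substantive difference.
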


\begin{proof}
Assume that $g_1,\ldots,g_r$ is set of generators of $I$ which have property \eqref{eq.idealgensprop} and let $\gr(f)$ be a nonzero homogeneous element of $\gr(I)$. Then $f=\sum_{i=1}^ra_ig_i$ with $\deg(a_ig_i)\leq\deg(f)$ for certain $a_1,\ldots,a_r\in k[x_1,\ldots,x_n]$. So $\gr(f)=\sum_i\gr(a_i)\gr(g_i)$ where the sum is over all indices $i$ such that $\deg(a_ig_i)=\deg(f)$. Thus the $\gr(g_i)$ generate $\gr(I)$. Now assume that the latter is the case and let $f$ be a nonzero element of $I$. We can write $\gr(f)=\sum_{i=1}^ra_i\gr(g_i)$ where $a_i$ is homogeneous of degree $\deg(f)-\deg(g_i)$. Then $f-\sum_{i=1}^ra_ig_i$ is of strictly lower degree than $f$ and we can finish by induction.
\end{proof}

The symmetric group $\Sym_r$ on $s$ symbols acts on $\otimes^rV$ via $\pi\cdot(x_1\otimes\cdots\otimes x_r)=x_{(\pi^{-1})_1}\otimes\cdots\otimes x_{(\pi^{-1})_r}$. Denote the group algebra of $\Sym_r$ over $k$ by $k\la\Sym_r\ra$. The elements of $\End_{k\la\Sym_r\ra}(\otimes^rV)$, i.e. the k-linear endomorphisms of $\otimes^rV$ commuting with the action of $\Sym_r$, were called {\it bisymmetric substitutions} by Weyl. Clearly all enveloping algebras ${\mc E}^r(X)$ consist of bisymmetric substitutions.

To state the proposition below correctly we need some auxiliary notation. For a subset $S$ of a fixed finite dimensional $k$-vector space $W$ we denote the $k$-algebra of {\it polynomial functions} on $S$, i.e. functions on $S$ that are restrictions of polynomial functions on $W$, by $k[S]$. The algebra $k[W]$ is a polynomial algebra in the elements of any basis of $W^*$, because $k$ is infinite. In particular, $k[\Mat_n]$ is a polynomial algebra in the matrix entry functions.

Now take $W=\Mat_n$. Clearly $k[S]$ is isomorphic to $k[\Mat_n]/I$, where $I$ is the ideal of polynomial functions on $\Mat_n$ that vanish on $S$. The algebra $k[S]$ inherits a filtration from $k[\Mat_n]$. We denote the filtration subspace of index $s$ by $k[S]^{\sleq s}$. If $S$ is closed under multiplication by nonzero scalars, then the ideal $I$ will be homogeneous and $k[S]$ is a graded algebra. We denote the graded subspace of index $s$ by $k[S]^s$. If $S$ is a submonoid of $\Mat_n$, then $I$ is a biideal and $k[S]$ is a $k$-bialgebra. The subspace $k[S]^{\sleq s}$ is then a subcoalgebra of $k[S]$ and if $S$ contains the nonzero multiples of the identity, then $k[S]^s$ is a subcoalgebra of $k[S]$. We denote the closure of a subset $S$ of $\Mat_n$ under multiplication by nonzero scalars by $k^{\stimes} S$. Note that in this notation $k[\GL_n]$ is equal to $k[\Mat_n]$ and not to $k[\Mat_n][\det^{-1}]$.

After Proposition~\ref{prop.envalg} we will only use this notation in the following situation where it is in accordance with the standard notation. If $X$ is a $k$-defined closed subvariety of $W(K):=K\otimes_kW$ such that the set $S$ of $k$-defined points in $X$ is dense in $X$, then $k[S]$ in the notation here is naturally isomorphic to the algebra $k[X]$ of regular functions on $X$ that are defined over $k$. This applies for example to the symplectic group $\Sp_n$ as we shall see later. The next proposition is essentially due to Schur and Weyl.

\begin{propgl}\label{prop.envalg}
Let $M$ be a submonoid of $\Mat_n$. Then
\begin{enumerate}[(i)]
\item the natural map $\mc{E}^s(M)^*\to k[k^{\stimes} M]^s$ is an isomorphism of coalgebras.
\item the natural map $\mc{E}^{\sleq s}(M)^*\to k[M]^{\sleq s}$ is an isomorphism of coalgebras.
\item $\mc{E}^s(\GL_n)=\End_{k\la\Sym_s\ra}(\otimes^sV)$.
\item $\mc{E}^{\sleq s}(\GL_n)=\End_{k\la \mf{S}\ra}(T^{\sleq s}(V))$, where $\mf{S}=1\times1\times\Sym_2\times\cdots\times\Sym_s$.
\end{enumerate}
\end{propgl}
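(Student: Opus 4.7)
The plan is to prove (i) and (ii) by constructing explicit coalgebra maps, and then to deduce (iii) and (iv) by a dimension count against the bisymmetric endomorphism algebra.

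For (i), given $\phi\in\mc{E}^s(M)^*$, I extend arbitrarily to $\hat\phi\in\End_k(\otimes^sV)^*$ and set $f_\phi(u)=\hat\phi(u^{\otimes s})$. Because $u\mapsto u^{\otimes s}$ is polynomial of degree $s$ on $\Mat_n$, the function $f_\phi$ lies in $k[\Mat_n]^s$; its restriction to $M$ is independent of the chosen extension (since $u^{\otimes s}\in\mc{E}^s(M)$), and the homogeneity $\rho(cu)=c^s\rho(u)$ makes this restriction extend to $k^{\stimes}M$, placing it in $k[k^{\stimes}M]^s$. Injectivity of $\phi\mapsto f_\phi$ is immediate: if $f_\phi$ vanishes on $M$ then $\phi$ vanishes on the spanning set $\{u^{\otimes s}:u\in M\}$ of $\mc{E}^s(M)$. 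For surjectivity, the auxiliary map $\End_k(\otimes^sV)^*\to k[\Mat_n]^s$ is itself surjective, since the dual-basis functional $A\mapsto\la e_I,Ae_J\ra$ is sent to the matrix-entry monomial $x_{i_1j_1}\cdots x_{i_sj_s}$, and such monomials span $k[\Mat_n]^s$; composing with the tautological surjection $k[\Mat_n]^s\twoheadrightarrow k[k^{\stimes}M]^s$ gives a surjection whose kernel is exactly the annihilator of $\mc{E}^s(M)$, yielding the isomorphism. Coalgebra compatibility then follows from $\rho(uv)=\rho(u)\rho(v)$: the identity $\Delta(f_\phi)(u,v)=f_\phi(uv)=\phi(\rho(u)\rho(v))$ matches term-by-term the coproduct on $\mc{E}^s(M)^*$ dual to algebra multiplication.

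Part (ii) runs identically, with the polynomial map $u\mapsto 1\oplus u\oplus u^{\otimes 2}\oplus\cdots\oplus u^{\otimes s}$ having filtration degree $s$ rather than homogeneous degree $s$, so one works with the filtered piece $k[M]^{\sleq s}$ throughout.

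For (iii), specialising (i) to $M=\GL_n$ and using $k[\GL_n]=k[\Mat_n]$ in the paper's convention yields $\dim\mc{E}^s(\GL_n)=\dim k[\Mat_n]^s=\binom{n^2+s-1}{s}$. The canonical identification $\End_k(\otimes^sV)\cong\Mat_n^{\otimes s}$ intertwines conjugation by $\Sym_s$ with permutation of tensor factors, so $\End_{k\la\Sym_s\ra}(\otimes^sV)\cong(\Mat_n^{\otimes s})^{\Sym_s}$, and the standard basis of symmetric tensors indexed by multisets of matrix units shows this has the same dimension. Since $u^{\otimes s}$ is manifestly $\Sym_s$-equivariant, $\mc{E}^s(\GL_n)\subseteq\End_{k\la\Sym_s\ra}(\otimes^sV)$, and equality follows from the dimension match. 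Part (iv) is the analogous argument via (ii), carried out summand-by-summand on the graded pieces of $k[\Mat_n]^{\sleq s}$. The main obstacle I expect is the surjectivity step in (i)/(ii); everything else is either a direct verification, a classical dimension count, or a routine compatibility check.
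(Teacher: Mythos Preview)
Your argument is correct and follows essentially the same skeleton as the paper for parts (i) and (ii): injectivity from the fact that the elements $u^{\otimes s}$ (resp.\ $\bigoplus_r u^{\otimes r}$) span the enveloping algebra, and surjectivity from the commutative square relating the case of general $M$ to $M=\Mat_n$, where surjectivity is visible on monomials. Your ``kernel is the annihilator of $\mc{E}^s(M)$'' is exactly the content of the paper's diagram \eqref{eq.envalg}.

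Where you genuinely diverge is in (iii) and (iv). The paper identifies the transpose of the natural map with Green's map $(k[\Mat_n]^s)^*\to\End_k(\otimes^sV)$ and then invokes \cite[2.6]{Green} to obtain both the isomorphism in (i) for $\GL_n$ and the equality $\mc{E}^s(\GL_n)=\End_{k\la\Sym_s\ra}(\otimes^sV)$ in one stroke. You instead use (i) to read off $\dim\mc{E}^s(\GL_n)=\dim k[\Mat_n]^s=\binom{n^2+s-1}{s}$, compute $\dim\End_{k\la\Sym_s\ra}(\otimes^sV)=\dim(\Mat_n^{\otimes s})^{\Sym_s}$ by the orbit-sum basis (valid in any characteristic), and conclude by inclusion plus matching dimensions. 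This is a cleaner, self-contained route that bypasses the Schur-algebra machinery; what the paper's approach buys is the explicit link to Green's coalgebra $A_k(n,s)$ and its dual Schur algebra, which is conceptually useful elsewhere in the paper (Remark~\ref{rems.envalg}.3).

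One small point on (iv): your phrase ``summand-by-summand'' tacitly uses that $\End_{k\la\mf{S}\ra}(T^{\sleq s}(V))$ decomposes as $\bigoplus_r\End_{k\la\Sym_r\ra}(\otimes^rV)$. This is correct with the paper's intended reading of $k\la\mf{S}\ra$ as $\bigoplus_r k\la\Sym_r\ra$ acting blockwise (so that its image contains the projections onto each $\otimes^rV$); you should make this explicit, since with the literal product-group action the off-diagonal blocks would not automatically vanish.
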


\begin{proof}
The natural map in (ii) composes $f\in \mc{E}^{\sleq s}(M)^*$ with the monoid homomorphism $M\to\mc{E}^{\sleq s}(M)$. The result is clearly a polynomial function on $M$ of degree $\le s$. That the natural map is injective follows from the fact that endomorphisms of $T^{\sleq s}(V)$ that represent the elements of $M$ span $\mc{E}^{\sleq s}(M)$, since $M$ is a submonoid of $\Mat_n$.

The algebra $\mc{E}^s(M)$ is equal to $\mc{E}^s(k^{\stimes}M)$, since $\alpha\nts\cdot\nts\id\in\Mat_n$ is represented by $\alpha^s\nts\cdot\nts\id$ on $\otimes^sV$. So in the case of (i) we may assume that $M$ contains the nonzero multiples of the identity. The injectivity of the natural map, which composes $f\in \mc{E}^s(M)^*$ with $M\to\mc{E}^s(M)$, now follows as in (ii).

By considering the commutative diagram below and its graded version in case $M$ contains the multiples of the identity, we see that it is now sufficient to prove (i) and (ii) for $M=\Mat_n$, since the two vertical arrows are surjective morphisms of coalgebras.
\begin{equation}
\label{eq.envalg}
\xymatrix{
\mc{E}^{\sleq s}(\Mat_n)^*\ar[r]\ar@{->>}[d]&k[\Mat_n]^{\sleq s}\ar@{->>}[d]\\
\mc{E}^{\sleq s}(M)^*\ar[r]&k[M]^{\sleq s}
}
\end{equation}

We first consider the natural map of (i) for $M=\GL_n$. To avoid confusion, we will write $k[\Mat_n]$ instead of $k[\GL_n]$, since these algebras are canonically isomorphic. It comes from the natural map $\End_k(\otimes^sV)^*\to k[\Mat_n]^s$. Let $I=I(n,s)=\{1,\ldots,n\}^s$ as in \cite[2.1]{Green}. We have a standard basis $(E_{i,j})_{(i,j)\in I\times I}$ of $\End_k(\otimes^sV)$. Our map, maps the element $E_{i,j}^*$ of the dual basis to Green's $c_{i,j}$. The $c_{i,j}$ form a basis when we pass to $(I\times I)/\Nts\sim$, where $\sim$ is the equivalence relation given by the action of the $\Sym_s$ on $I\times I$ defined in \cite[2.1]{Green}. Now we form the dual basis $c_{\ov{(i,j)}}^*$ and define $c_{i,j}^*=c_{\ov{(i,j)}}^*$\,. Here $\ov{(i,j)}$ denotes the canonical image of $(i,j)$ in $(I\times I)/\Nts\sim$\,. Then the transpose of our map maps $c_{i,j}^*$ to $\sum_{(k,l)\sim(i,j)}E_{k,l}$, which is precisely what Green's map $(k[\Mat_n]^s)^*\to\End_k(\otimes^sV)$ does. Assertion (i) for $\GL_n$ and (iii) now follow from \cite[2.6]{Green}. Note that (iii) shows that $\mc{E}^s(\Mat_n)=\mc{E}^s(\GL_n)$.

Now we consider the natural map of (ii) for $M=\GL_n$. We have seen that it is injective. It is also surjective, since for any polynomial function $f$ on $\GL_n$ of degree $\le s$ we can find an element of $\End(T^{\sleq s}(V))^*\cong T^{\sleq s}(\Mat_n^*)$ that is mapped to $f$. We have $\mc{E}^{\sleq s}(\GL_n)\subseteq\bigoplus_{r=0}^s\mc{E}^r(\GL_n)$. Since the natural maps of (i) and (ii) for $M=\GL_n$ are bijective we must have equality. It is now also clear that (ii) is an isomorphism of coalgebras.

Since the image of $k\la \mf{S}\ra$ in $\End_k(T^{\sleq s}(V))$ contains the projections given by the direct sum decomposition of $T^{\sleq s}(V)$, we have that $\End_{k\la \mf{S}\ra}(T^{\sleq s}(V))$ is equal to $\bigoplus_{r=0}^s\End_{k\la\Sym_r\ra}(\otimes^rV)$\,: it consists of $(s+1)$-tuples of "bisymmetric substitutions". So (iv) now follows from (iii) and it is now also clear that $\mc{E}^{\sleq s}(\GL_n)=\mc{E}^{\sleq s}(\Mat_n)$.
\end{proof}

\begin{remsgl}\label{rems.envalg}
1.\ Assertion (iv) of Proposition~\ref{prop.envalg} was proved in a different way by Weyl, see \cite[Thm.~4.4.E]{Wey}.\\
2.\ Proposition~\ref{prop.envalg} shows that $\mc{E}^{\sleq s}(M)$ only depends on the ideal $I$ of polynomial functions that vanish on $M$ and that $\mc{E}^s(M)$ only depends on the biggest homogeneous ideal contained in $I$, i.e. the ideal generated by the homogeneous polynomial functions on $\Mat_n$ that vanish on $M$.\\
3.\ Assume that $k=K$ is algebraically closed. Let $G$ be a closed subgroup of $\GL_n$ and let $A$ be the algebra of polynomial functions on $G$. Then the representations of the algebra $\mc{E}^{\sleq s}(G)= (A^{\sleq s})^*$ are precisely the rational representations of $G$ whose coefficients are polynomial functions on $G$ that are of filtration degree $\leq s$. If $G$ contains the nonzero multiples of the identity, then the representations of the algebra $\mc{E}^s(G)= (A^s)^*$ are precisely the rational representations of $G$ whose coefficients are homogeneous polynomial functions on $G$ that are of degree $s$. In case $G=\GL_n$, $(A^s)^*$ is the well-known {\it Schur algebra} denoted by $S_K(n,s)$ in \cite{Green}. Similar remarks apply to a closed submonoid $M$ of $\Mat_n$. Then the coefficients of a rational representation of $M$ are automatically polynomial, since all regular functions on $M$ are polynomial.
\end{remsgl}

We now come to the problem of finding defining equations for the enveloping algebras $\mc{E}^s(M)$ and $\mc{E}^{\sleq s}(M)$. We are only interested in defining equations within $\mc{E}^s(\Mat_n))$ and $\mc{E}^{\sleq s}(\Mat_n)$. By Proposition~\ref{prop.envalg}(iii) and (iv) one can then obtain a complete set of defining equations (within $\End_k(\otimes^sV)$ and $\oplus_{r=0}^s\End_k(\otimes^rV)$) by adding the equations of "bisymmetry". See \cite[(30)]{Br} or \cite[III (6.8)]{Wey}. Statement (i) of the next corollary is a formalisation of an idea of Weyl; see \cite[p 142]{Wey}.

\begin{cornn}
Let $M$ be a submonoid of $\Mat_n$, let $I$ be the ideal of polynomial functions on $\Mat_n$ that vanish on $M$ and let $I_{hom}$ be the biggest homogeneous ideal contained in $I$. Furthermore, let $g_1,\ldots,g_k$ be nonzero elements of $I$ and let $h_1,\ldots,h_l$ be nonzero homogeneous elements of $I_{hom}$. Denote the isomorphism $k[\Mat_n]^{\sleq s}\to \mc{E}^{\sleq s}(\Mat_n)^*$ by $\eta$ and the isomorphism $k[\Mat_n]^s\to \mc{E}^s(\Mat_n)^*$ by $\theta$. Then
\begin{enumerate}[(i)]
\item The elements $g_1,\ldots,g_k$ are generators of $I$ with property \eqref{eq.idealgensprop} if and only if for each $s\ge 1$ the functionals $\eta(g_im_i)$, where the $m_i$ are arbitrary monomials in the matrix entries of degree $\le s-\deg(g_i)$, define the algebra $\mc{E}^{\sleq s}(M)$.
\item The elements $h_1,\ldots,h_k$ are generators of $I_{hom}$ if and only if for each $s\ge 1$ the functionals $\theta(h_im_i)$, where the $m_i$ are arbitrary monomials in the matrix entries of degree $s-\deg(h_i)$, define the algebra $\mc{E}^s(M)$.
\end{enumerate}
\end{cornn}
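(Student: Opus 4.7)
The plan is to translate both assertions into purely ideal-theoretic statements via Proposition~\ref{prop.envalg}, and then match them against property \eqref{eq.idealgensprop} and the definition of generation of a homogeneous ideal. The phrase \emph{the functionals $\eta(g_im_i)$ define $\mc{E}^{\sleq s}(M)$} should be read as saying that $\mc{E}^{\sleq s}(M)\subseteq \mc{E}^{\sleq s}(\Mat_n)$ is precisely the common zero locus of these functionals. By linear duality this is equivalent to asserting that the $g_im_i$ span the annihilator of $\mc{E}^{\sleq s}(M)$ in $\mc{E}^{\sleq s}(\Mat_n)^*$; under the identification of Proposition~\ref{prop.envalg}(ii), that annihilator is exactly the kernel $I\cap k[\Mat_n]^{\sleq s}$ of the restriction map $k[\Mat_n]^{\sleq s}\to k[M]^{\sleq s}$.

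Granting this translation, I would prove (i) by showing that $g_1,\ldots,g_k$ generate $I$ with property \eqref{eq.idealgensprop} if and only if, for every $s\ge 1$, the products $g_im_i$ with $m_i$ a monomial of degree $\le s-\deg(g_i)$ span $I\cap k[\Mat_n]^{\sleq s}$. For the forward direction, any $f\in I\cap k[\Mat_n]^{\sleq s}$ can be written as $\sum a_ig_i$ with $\deg(a_ig_i)\le\deg(f)\le s$; since $k[\Mat_n]$ is an integral domain we have $\deg(a_i)=\deg(a_ig_i)-\deg(g_i)\le s-\deg(g_i)$, and expanding each $a_i$ as a $k$-linear combination of monomials of that degree finishes the argument. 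Conversely, a spanning statement valid for every $s$ forces the $g_i$ to generate $I$; and given $f\in I$ of degree $d$, regrouping an expression $f=\sum_{i,j}c_{ij}g_im_{ij}$ with $\deg(g_im_{ij})\le d$ by setting $a_i=\sum_j c_{ij}m_{ij}$ yields property \eqref{eq.idealgensprop}.

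For (ii) the same duality argument applies via Proposition~\ref{prop.envalg}(i), except that $\mc{E}^s(M)=\mc{E}^s(k^{\stimes}M)$, so the relevant kernel is that of $k[\Mat_n]^s\to k[k^{\stimes}M]^s$, namely the degree-$s$ component of the largest homogeneous ideal contained in $I$, i.e.\ $I_{hom}$. (One checks easily from the homogeneity identity $p(\alpha x)=\alpha^{\deg p}p(x)$ that $I_{hom}$ coincides with the ideal of functions vanishing on $k^{\stimes}M$.) The problem then reduces to showing that $h_1,\ldots,h_l$ generate $I_{hom}$ iff for each $s$ the products $h_im_i$ with $m_i$ a monomial of degree $s-\deg(h_i)$ span the homogeneous piece of $I_{hom}$ in degree $s$, which is simply the standard fact that a homogeneous ideal is generated by a set of homogeneous elements iff those elements, multiplied by monomials of the complementary degree, span every graded piece.

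The one genuinely substantive step is the duality translation in the first paragraph; the remainder is bookkeeping with degrees, using crucially that $k[\Mat_n]$ is an integral domain so that $\deg(ab)=\deg(a)+\deg(b)$ for nonzero $a,b$. The small point requiring a line of verification is the identification of $I_{hom}$ with the vanishing ideal of $k^{\stimes}M$, which is where the passage from $M$ to $k^{\stimes}M$ in Proposition~\ref{prop.envalg}(i) enters.
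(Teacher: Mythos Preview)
Your proof is correct and follows essentially the same approach as the paper: both identify, via the commutative square~\eqref{eq.envalg} from Proposition~\ref{prop.envalg}, the annihilator of $\mc{E}^{\sleq s}(M)$ inside $\mc{E}^{\sleq s}(\Mat_n)^*$ with $I^{\sleq s}=I\cap k[\Mat_n]^{\sleq s}$, and then observe that property~\eqref{eq.idealgensprop} is equivalent to the $g_im_i$ spanning $I^{\sleq s}$ for every $s$. Your write-up is slightly more explicit than the paper's (spelling out both directions of the spanning equivalence and the identification of $I_{hom}$ with the vanishing ideal of $k^{\stimes}M$), but there is no genuine difference in strategy.
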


\begin{proof}
Note that $I$ is  a proper ideal, since $M$ is nonempty. Denote the subspace of $I$ that consists of the elements of $I$ of degree $\le s$ by $I^{\sleq s}$. Then the $g_i$ are generators of $I$ with property \eqref{eq.idealgensprop} if and only if for each $s\ge 1$ the polynomials $g_im_i$, where the $m_i$ are arbitrary monomials in the matrix entries of degree $\le s-\deg(g_i)$, span $I^{\sleq s}$. Assertion (i) now follows from the fact that, because of the commutativity of \eqref{eq.envalg}, $\eta$ maps $I^{\sleq s}$ to the space of linear functionals on $\mc{E}^{\sleq s}(\Mat_n)$ that vanish on $\mc{E}^{\sleq s}(M)$. Assertion (ii) is proved in a similar way.
\end{proof}

\begin{remnn}
In \cite[Thm.'s 9.5a) and 10.5a)]{Doty} Doty determines generators for the defining ideals of the symplectic and orthogonal monoids in characteristic zero by a method which is essentially assertion (ii) of the above corollary. In characteristic zero one can namely use the defining equations of the symplectic and orthogonal enveloping algebras in $\End_k(\otimes^sV)$ as obtained by Brauer in \cite[(46),(47)]{Br}.
\end{remnn}

In the remainder of this note we assume that $n=2m$ is an even integer. We first intoduce some notation for the symplectic group which closely follows that of \cite{Oe}. Let $i\mapsto i'$ be the involution  of $\{1,\ldots,n\}$ defined by $i':=n+1-i$. Set $\e_i=1$ if $i\le m$ and $\e_i=-1$ if $i>m$ and define the $n\times n$-matrix $J$ with coefficients in $k$ by $J_{ij}=\delta_{ij'}\e_i$. So

$$J=
\begin{bmatrix}
&&&&&1\\
&0&&&\rot{72}{$\ddots$}&\\
&&&1&&\\
&&-1&&&\\
&\rot{72}{$\ddots$}&&&0&\\
-1&&&&&
\end{bmatrix}.
$$

On $V$ we define the nondegenerate symplectic form $\la\ ,\ \ra$ by
$$\la u,v\ra:=u^TJv=\sum_{i=1}^n\e_iu_iv_{i'}\ .$$
The {\it symplectic group} $\Sp_n=\Sp_n(k)$ is defined as the set of $n\times n$-matrices over $k$ that satisfy $A^TJA=J$, i.e. the matrices for which the corresponding endomorphism of $V$ preserves the form $\la\ ,\ \ra$. Clearly those matrices are invertible and $\Sp_n$ is a subgroup of $\GL_n$. Furthermore $\Sp_n(K)$ is an algebraic group which is connected, semisimple and defined and split over the prime field. This implies that all root subgroups $U_\alpha(K)$ with respect to some $k$-split maximal torus are defined over $k$. Since clearly $U_\alpha(k)$ is dense in $U_\alpha(K)$, we must have that $\Sp_n$ is dense in $\Sp_n(K)$.

Note that $A^TJA$ consists of the scalar products of the columns of $A$ with each other and that $AJA^T$ consists of the scalar products of the rows of $A$ with each other. An easy calculation shows that the condition $A^TJA=J$ is equivalent to the condition $AJA^T=J$. We denote the $(i,j)^{\rm th}$ entry function on $\Mat_n$ by $x_{ij}$. Define
\begin{equation}
g_{ij}:=\sum_{l=1}^n\e_lx_{li}x_{l'j}\text{\quad and\quad}\ov{g}_{ij}:=\sum_{l=1}^n\e_lx_{il}x_{jl'}.
\end{equation}
The condition $A^TJA=J$ means that we require the functions $g_{ij}-\delta_{ij'}\e_i$, $i<j$, to vanish on $A$. The condition $AJA^T=J$ means that we require the functions $\ov{g}_{ij}-\delta_{ij'}\e_i$, $i<j$, to vanish on $A$.  We call the ideal of polynomial functions on $\Mat_n$ that vanish on $\Sp_n$ the {\it symplectic ideal}. It is well known that both sets of functions separately generate the symplectic ideal in case $k$ is algebraically closed. In view of the density of $\Sp_n$ in $\Sp_n(K)$ this must then also hold for an arbitrary infinite field $k$.

In the sequel we will also need the {\it symplectic monoid} $\SpM_n$ and the {\it symplectic similitude group} $\GSp_n$ as introduced in \cite{Doty}; see also \cite{Oe} and \cite{Dotyetal}. The symplectic monoid $\SpM_n$ is defined as the set of matrices $A$ for which there exists a scalar $d(A)\in k$ such that $A^TJA=AJA^T=d(A)J$. Note that if $d(A)\ne0$, $A^TJA=d(A)J$ is equivalent to $AJA^T=d(A)J$. Clearly $\SpM_n$ is a submonoid of $\Mat_n$, in fact it is the set of $k$-defined points of $\SpM_n(K)$ which is a $k$-defined closed submonoid of $\Mat_n(K)$. This follows from \cite[Cor.~6.2]{Oe}. The function $d$ is a polynomial function on $\SpM_n$, it is called the {\it coefficient of dilation}. We have
for all $i\in\{1,\ldots,n\}$
$$d=\e_ig_{ii'}=\e_i\ov{g}_{ii'}\text{ \ on }\SpM_n.$$
The {\it symplectic similitude group} $\GSp_n$ is defined as the group of invertible elements in $\SpM_n$. We have $\GSp_n(K)=K^\times\Sp_n(K)$ and $\GSp_n$ is Zariski dense in $\GSp_n(K)$ and $\SpM_n(K)$. So a polynomial function on $\Mat_n$ vanishes on $\GSp_n$ if and only if it vanishes on $\SpM_n$ and the ideal of $k[\Mat_n]$ that consists of these functions is the biggest homogeneous ideal contained in the symplectic ideal.

We will now give defining equations for ${\mc E}^{\sleq s}(\Sp_n)$. As stated before the Corollary to Proposition~\ref{prop.envalg}, we are only interested in defining equations within the algebra ${\mc E}^{\sleq s}(\Mat_n)$. This algebra consists of $(s+1)$-tuples $(A_0,\ldots,A_s)$ of endomorphisms with $A_r\in\End_k(\otimes^rV)$ bisymmetric for all $r\in\{0,\ldots,s\}$. The standard basis $(e_1,\ldots,e_n)$ of $V=k^n$ gives bases for the vector spaces $\otimes^rV$. The entry of index $((i_1,\ldots,i_r),(j_1,\ldots,j_r))$ of the matrix of $A_r$ with respect to these bases is denoted by $a_{i_1\cdots i_r,j_1\cdots j_r}$.

The first statement in (i) of the theorem below was already remarked by S.~Oehms \cite[p 38]{Oe}.

\begin{thmgl}\label{thm.symplecticideal}
The following holds
\begin{enumerate}[(i)]
\item $\gr(I)$ is generated by the elements $g_{ij}$ and $\ov{g}_{ij}$, $i<j$. Furthermore, $\gr(k[\Sp_n])$ is isomorphic to the algebra of polynomial functions on the set of $n\times n$-matrices over $k$ whose row and column space are totally singular and this algebra is an integral domain.
\item The sets of elements $g_{ij}-\delta_{ij'}\e_i$ and $\ov{g}_{ij}-\delta_{ij'}\e_i$, $i<j$, form together a set of generators of the symplectic ideal with property \eqref{eq.idealgensprop}.
\item The symplectic enveloping algebra in $\End_k(T^{\sleq s}(V))$ is defined by the following equations for $r=2,\ldots,s$
\begin{align}
\sum_{l=1}^n\e_la_{ll'i_3\cdots i_r,j_1\cdots j_r}=
\delta_{j_1,j_2'}\e_{j_1}a_{i_3\cdots i_r,j_3\cdots j_r}\,,\label{eq.envalgeqs1}\\
\sum_{l=1}^n\e_la_{i_1\cdots i_r,ll'j_3\cdots j_r}=
\delta_{i_1,i_2'}\e_{i_1}a_{i_3\cdots i_r,j_3\cdots j_r}\,.\label{eq.envalgeqs2}
\end{align}
\end{enumerate}
\end{thmgl}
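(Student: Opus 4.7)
The plan is to reduce everything to part~(i), which carries the essential content: once (i) is in hand, (ii) is an immediate application of Lemma~\ref{lem.idealgens}, and (iii) is then an immediate application of assertion~(i) of the Corollary to Proposition~\ref{prop.envalg}. So the main work sits in~(i).

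For (i), I would split the statement into three pieces. The equality $\gr(I)=(g_{ij},\ov{g}_{ij}:i<j)$ was already observed by Oehms \cite[p.~38]{Oe}; the underlying point is that the largest homogeneous ideal contained in the symplectic ideal $I$ is the vanishing ideal of $\SpM_n$ (since $\GSp_n=k^\times\Sp_n$ is dense in $\SpM_n(K)$), and Oehms' presentation of $k[\SpM_n]$ gives a filtered-to-graded comparison identifying $\gr(k[\Sp_n])$ with the degeneration of $k[\SpM_n]$ at $d=0$. For the geometric model, note that $g_{ij}(A)=(A^TJA)_{ij}$ and $\ov{g}_{ij}(A)=(AJA^T)_{ij}$, and that both $A^TJA$ and $AJA^T$ are antisymmetric (because $J$ is), so the vanishing of all $g_{ij}$ and $\ov{g}_{ij}$ for $i<j$ is equivalent to $A^TJA=AJA^T=0$, which is precisely the condition that both the row and column spaces of $A$ are totally singular. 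The integral-domain property amounts to irreducibility and reducedness of the matrix variety $X=\{A\in\Mat_n:A^TJA=AJA^T=0\}$; the natural route is to exploit flatness of $\SpM_n$ over the affine $d$-line, whose generic fibre $\Sp_n$ is irreducible of known dimension, and Oehms' standard monomial theory for $k[\SpM_n]$ should deliver both ingredients and in fact already exhibit $(g_{ij},\ov{g}_{ij})$ as a prime ideal.

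For (ii), apply Lemma~\ref{lem.idealgens}: the generators $g_{ij}-\delta_{ij'}\e_i$ and $\ov{g}_{ij}-\delta_{ij'}\e_i$ of $I$ have associated graded elements $g_{ij}$ and $\ov{g}_{ij}$ (the constant terms being of strictly lower degree), and these generate $\gr(I)$ by~(i), so the lemma yields property~\eqref{eq.idealgensprop} for the unhatted generators.

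For (iii), combine (ii) with assertion~(i) of the Corollary to Proposition~\ref{prop.envalg}. Under $\eta:k[\Mat_n]^{\sleq s}\to\mc{E}^{\sleq s}(\Mat_n)^*$ a monomial $x_{i_1j_1}\cdots x_{i_rj_r}$ is sent to the functional extracting the entry $a_{i_1\cdots i_r,j_1\cdots j_r}$ from the $r$-th tensor block. Thus $\eta\bigl((g_{j_1j_2}-\delta_{j_1,j_2'}\e_{j_1})\cdot x_{i_3j_3}\cdots x_{i_rj_r}\bigr)$ is exactly the difference of the two sides of~\eqref{eq.envalgeqs1}, and letting the auxiliary monomial range over all choices of $(i_3,j_3,\dots,i_r,j_r)$ produces the full family of equations; the analogous computation with $\ov{g}_{i_1i_2}-\delta_{i_1,i_2'}\e_{i_1}$ yields~\eqref{eq.envalgeqs2}. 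The main obstacle throughout is the integral-domain/irreducibility statement in~(i); the remainder is essentially bookkeeping once Oehms' structural description of $\SpM_n$ is invoked.
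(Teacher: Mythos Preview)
Your reduction of (ii) and (iii) to (i) via Lemma~\ref{lem.idealgens} and the Corollary to Proposition~\ref{prop.envalg} matches the paper exactly, as does your filtered-to-graded identification $\gr(k[\Sp_n])\cong k[\SpM_n]/(d)\cong k[\Mat_n]/(g_{ij},\ov g_{ij})$ via Oehms' presentation.

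Where you diverge is in the integral-domain step, and here your sketch is too optimistic. Oehms' standard monomial theory by itself does \emph{not} show that $(d)$ is prime in $k[\SpM_n]$, nor that the ideal $(g_{ij},\ov g_{ij})$ is radical; the flatness-over-the-$d$-line heuristic you propose would still leave you needing reducedness or irreducibility of the special fibre, which is exactly the point at issue. The paper closes this gap by a two-step argument you do not mention: first it compares the graded pieces of $K[\SpM_n]/(d)$ (which Oehms equips with a basis of bideterminants indexed by King-type symplectic standard tableaux) with those of $K[\mc V]$ (which De~Concini equips with a basis indexed by \emph{his} symplectic standard tableaux), and matches the two counts via $N_\lambda=\dim Y(\lambda)=N'_\lambda$ (Berele/Donkin vs.\ De~Concini); this forces the surjection $K[\SpM_n]/(d)\twoheadrightarrow K[\mc V]$ to be an isomorphism, so the ideal is radical. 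Second, irreducibility of $\mc V$ is obtained geometrically via Witt's Lemma: the morphism $\Sp_n(K)\times\mc V_0\times\Sp_n(K)\to\mc V$, $(A,S,B)\mapsto ASB$, is surjective from an irreducible source, where $\mc V_0$ is the linear space of matrices with row and column space inside a fixed maximal isotropic $V_0$. Together these give primality of $(g_{ij},\ov g_{ij})$. So your plan is sound, but the substantive content of (i) requires this combinatorial dimension comparison and the Witt's-Lemma argument, neither of which is supplied by invoking Oehms alone.
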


\begin{proof}
(i).\ Let $J$ be the ideal of $k[\Mat_n]$ generated by the elements  $g_{ij}$ and $\ov{g}_{ij}$, $i<j$. Then the zero set of $J$ is the set of $n\times n$-matrices over $k$ whose row and column space are totally singular. It is also the zero set of $d$ in $\SpM_n$. In fact it follows from \cite[Cor.~6.2]{Oe} that $k[\SpM_n]/(d)\cong k[\Mat_n]/J$. This result also implies that $k[\SpM_n]/(d-1)\cong k[\Mat_n]/I=k[\Sp_n]$. Since $d$ is nonzero and homogeneous of degree $2>0$, we have $\gr(d-1)=d$ and $\gr((d-1))=(d)$. Therefore
$$k[\Mat_n]/J\cong k[\SpM_n]/(d)\cong\gr(k[\Sp_n])\cong k[\Mat_n]/\gr(I).$$
Since the composite is the epimorphism given by the inclusion $J\subseteq\gr(I)$, we must have $J=\gr(I)$.

Let $\mc V$ be the variety of $n\times n$-matrices over $K$ whose row and column space are totally singular. Then we have an epimorphism $K[\SpM_n]/(d)\twoheadrightarrow K[\mc V]$ of graded $K$-algebras. To show that it is an isomorphism it suffices to show that for each $l$ the dimensions of the graded pieces of degree $l$ of both algebras are equal. By \cite[Thm.~6.1]{Oe} the $l^{\rm th}$ graded piece of $K[\SpM_n]/(d)$ has a basis consisting of all bideterminants $(T_1|\,T_2)$, where $T_1$ and $T_2$ are symplectic standard tableaux in the sense of King of the same shape $\lambda$ which is a partition of $l$ with at most $m$ parts (notation: $\lambda\vdash_m l$). So its dimension is $\sum_{\lambda\vdash_m l}N_\lambda^2$, where $N_\lambda$ is the number of symplectic standard tableaux in the sense of King of shape $\lambda$. By \cite[Thm.~6.1]{DeC} the $l^{\rm th}$ graded piece of $K[\mc V]$ has a basis consisting of all bideterminants $(T_1|\,T_2)$, where $T_1$ and $T_2$ are symplectic standard tableaux in the sense of De Concini of the same shape $\lambda\vdash_m l$.\footnote{De Concini uses different terminology, but it is clear that his condition of symplectic standardness is imposed on $T_1$ and $T_2$ separately.} So its dimension is $\sum_{\lambda\vdash_m l}{N'_\lambda}^2$, where $N'_\lambda$ is the number of symplectic standard tableaux in the sense of De Concini of shape $\lambda$. Let $Y(\lambda)$ be the irreducible $\Sp_n(\mb C)$-module associated to $\lambda\vdash_m l$. Then $N_\lambda=\dim(Y(\lambda))=N'_\lambda$, by \cite{Ber} or \cite[Thm.~2.3b]{Don} and \cite[Thm.'s 4.8, 4.10, Prop. 4.10]{DeC}. Note that we now also know that $\mc V$ is defined over $k$.

We will now show that $\mc V$ is irreducible. Let ${\mc V}_0$ be the set of $n\times n$-matrices whose row and column space lie in the $K$-span $V_0$ of $e_1,\ldots,e_m$. Note that $V_0$ is a maximal totally singular subspace of $V(K)=K^n$ and that $\mc V_0$ is a $k$-defined vector subspace of $\Mat_n(K)$, where $\Mat_n(K)$ has the standard $k$-structure. Now consider the morphism $\Sp_n(K)\times\mc V_0\times\Sp_n(K)\to\Mat_n(K)$ given by $(A,S,B)\mapsto ASB$. Clearly its image lies in $\mc V$ and by Witt's Lemma its image is equal to $\mc V$. Since $\Sp_n(K)$ and $\mc V_0$ are irreducible we must have that $\mc V$ is irreducible. Furthermore the set of $k$-defined points is dense in $\mc V$, since this holds for $\Sp_n(K)$ and $\mc V_0$ and our morphism is defined over $k$. Note that, since $K[\mc V]$ is an integral domain, $d$ generates a prime ideal in $K[\SpM_n]$ and therefore also in $k[\SpM_n]$. \\
(ii).\ This follows from (i) and Lemma~\ref{lem.idealgens}.\\
(iii).\ This follows immediately from (ii) and (i) of the Corollary to Proposition~\ref{prop.envalg}.
\end{proof}

\begin{remsgl}\label{rems.defeqs}
1.\ The equations in (iii) are the same as those obtained by Weyl \cite[p 174]{Wey} in characteristic zero.\\
2.\ The equations for ${\mc E}^s(\Sp_n)$ that Brauer \cite[(47)]{Br} found in characteristic zero also define this algebra in positive characteristic. This can be shown as follows. Let $I$ be the symplectic ideal. Then \cite[Cor.~6.2]{Oe} gives homogeneous generators of $I_{hom}$.\footnote{In \cite[(14)]{Oe}, $f_{ll'}-f_{kk'}$ should be replaced by $\e_kf_{ll'}-\e_lf_{kk'}$ (or by $\e_lf_{ll'}-\e_kf_{kk'}$).} Here one can avoid the FRT-construction by simply taking \cite[(13)]{Oe} as the definition of $A^s_R(n)$. The stability under base change is then trivial and the proofs of \cite[Thm~6.1 and Cor.~6.2]{Oe} still apply. Now (ii) of the Corollary to Proposition~\ref{prop.envalg} gives the following equations for ${\mc E}^s(\Sp_n)$ within ${\mc E}^s(\Mat_n)$:
$$\delta_{i_1,i_2'}\e_{i_1}\sum_{l=1}^n\e_la_{ll'i_3\cdots i_r,j_1\cdots j_r}=
\delta_{j_1,j_2'}\e_{j_1}\sum_{l=1}^n\e_la_{i_1\cdots i_r,ll'j_3\cdots j_r}.$$
%
%
\end{remsgl}

\section{Infinitesimal invariants in $K[G]$}\label{s.liealginvs}
In this section we assume that $k=K$ is algebraically closed. Furthermore, $G$ is a connected reductive algebraic group over $K$ and $\g=\Lie(G)$ is its Lie algebra. The conjugation action of $G$ on itself induces an action of $G$ on $K[G]$, the algebra of regular functions on $G$. We will refer to this action and its derived $\g$-action as {\it conjugation} actions. The conjugation action of $G$ on $K[G]$ is by algebra automorphisms, so the conjugation action of $\g$ on $K[G]$ is by derivations. This implies that the space $K[G]^\g=\{f\in K[G]\,|\,x\cdot f=0\text{ for all } x\in\g\}$ of {\it infinitesimal invariants} is a subalgebra of $K[G]$. Note that $K[G]^G\subseteq K[G]^\g$ and that $K[G]^p\subseteq K[G]^\g$ if $K$ is of characteristic $p>0$.


\begin{thmgl}\label{thm.ufd}
Assume that $K$ is of characteristic $p>0$ and that the derived group of $G$ is simply connected. Then the invariant algebra $K[G]^\g$ is a unique factorisation domain. Its irreducible elements are the irreducible elements of $K[G]$ that are invariant under $\g$ and the $p$-th powers of the irreducible elements of $K[G]$ that are not invariant under $\g$.
\end{thmgl}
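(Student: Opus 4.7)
The plan is to combine three ingredients: (a) the fact, due to Popov, that $K[G]$ itself is a UFD when $G$ is connected reductive with simply connected derived group -- a result whose proof in the crucial symplectic case rests on the analysis of the defining ideal carried out in Theorem~\ref{thm.symplecticideal}(i); (b) the observation that $\g$ acts by derivations, so in characteristic $p$ every $p$-th power in $K[G]$ is automatically $\g$-invariant; and (c) a logarithmic-derivative argument applied to an arbitrary $\g$-invariant element. Given a nonzero $h\in K[G]^\g$, factor $h=c\prod_i f_i^{n_i}$ in $K[G]$ with the $f_i$ pairwise non-associate irreducibles. For each $x\in\g$, the identity $x\cd h=0$ and the Leibniz rule give, in the fraction field $K(G)$,
\[
\sum_i n_i\,\frac{x\cd f_i}{f_i}=0.
\]
Clearing denominators and using that the $f_i$ are pairwise coprime primes in $K[G]$, we deduce $f_i\mid n_i(x\cd f_i)$ in $K[G]$ for every $i$ and every $x\in\g$.

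Consequently, whenever $p\nmid n_i$ the integer $n_i$ is a unit in $K$, forcing $f_i\mid x\cd f_i$ for all $x\in\g$; write $x\cd f_i=\lambda_{i,x}f_i$ with $\lambda_{i,x}\in K[G]$ depending $K$-linearly on $x$. The crux of the proof is then the following key step: \emph{if $f\in K[G]$ is irreducible and $f\mid x\cd f$ for every $x\in\g$, then $x\cd f=0$, i.e.\ $f\in K[G]^\g$.} Granting this, set $S=\{i:p\nmid n_i\}$; for $i\in S$ we have $f_i\in K[G]^\g$, while for $i\notin S$ we have $n_i=pm_i$ and $f_i^{n_i}=(f_i^p)^{m_i}$ with $f_i^p\in K[G]^\g$ since $x\cd f_i^p=pf_i^{p-1}(x\cd f_i)=0$. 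So $h$ is a product of the irreducibles claimed in the theorem, and uniqueness of this factorisation in $K[G]^\g$ descends from uniqueness in $K[G]$.

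The key step is the main obstacle, and the hypothesis on the derived group should enter here. Geometrically, $f\mid x\cd f$ for all $x\in\g$ says that the irreducible divisor $V(f)\subset G$ is stable under the first Frobenius kernel $G_1$ acting by conjugation. The idea is to promote this to $G$-stability of $V(f)$ by a connectedness/irreducibility argument on the orbit of $V(f)$ in the space of divisors, making $f$ a conjugation semi-invariant for $G$, say with character $\chi:G\to\mb{G}_m$. Since conjugation by $Z(G)$ is trivial, $\chi$ factors through $G/Z(G)$, which is semisimple, and every character of a connected semisimple group is trivial -- so $\chi=1$ and $f$ is $G$-invariant, hence $\g$-invariant. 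The delicate point is the lift from infinitesimal to honest $G$-stability, which is automatic in characteristic zero but requires genuine work in characteristic $p$; the simply connected derived group hypothesis is what makes the character-theoretic step at the end conclude.

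For the characterisation of irreducibles: a $\g$-invariant irreducible $f\in K[G]$ stays irreducible in $K[G]^\g$ because any factorisation in the subring is one in $K[G]$ and the units of $K[G]$ are automatically $\g$-invariant. Conversely, let $f\in K[G]$ be irreducible and not $\g$-invariant; then $f^p\in K[G]^\g$ as above. Any putative factorisation $f^p=ab$ in $K[G]^\g$ with $a,b$ non-units must, by unique factorisation in $K[G]$, be of the form $a=uf^s$, $b=u^{-1}f^{p-s}$ with $u\in K[G]^\times$ and $0<s<p$; applying $x\in\g$ to $a\in K[G]^\g$ and using the Leibniz rule together with the key step then forces $x\cd f=0$, contradicting the non-invariance of $f$. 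Hence $f^p$ is irreducible in $K[G]^\g$, and the factorisation described in the theorem is the unique one.
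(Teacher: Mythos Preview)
Your reduction to the ``key step'' is correct and matches the structure in \cite{T}: once one knows that an irreducible $f$ with $f\mid x\cdot f$ for all $x\in\g$ must in fact satisfy $x\cdot f=0$ (this is exactly Proposition~1 of \cite{T}), the description of $K[G]^\g$ and of its irreducibles follows as you write. But you have not proved the key step, and you say so yourself: the passage from $G_1$-stability of the divisor $V(f)$ to honest $G$-stability is precisely the characteristic-$p$ obstacle, and nothing in your sketch bridges it. Your claim that the simply connected hypothesis is what makes the final character argument go through is also off: $G/Z(G)$ is semisimple, hence has no nontrivial characters, for \emph{any} connected reductive $G$; so that step would work even without the hypothesis, and the hypothesis cannot be doing its work there.

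The paper's route to the key step is entirely different and does not attempt any infinitesimal-to-global promotion. By the results already in \cite{T}, the only case of Proposition~1 left open is $G=\Sp_n$, and there the argument is a filtration-degree trick: the conjugation action of $\g$ on $K[\Sp_n]$ preserves the filtration inherited from $K[\Mat_n]$, so from $x\cdot f=\lambda f$ one can compare degrees in the associated graded algebra. The decisive input is that $\gr(K[\Sp_n])$ is an \emph{integral domain}, which is exactly what Theorem~\ref{thm.symplecticideal}(i) supplies. Thus Theorem~\ref{thm.symplecticideal}(i) feeds into the key step itself, not (as your item (a) suggests) into the UFD property of $K[G]$; the latter is handled separately (Proposition~2 and \cite{T}) and does not rely on the analysis of the symplectic ideal carried out here.
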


\begin{proof}
By Remark~3 in \cite{T} we have to prove Proposition~1 in that paper for $\Sp_n$. In fact we may assume that $n\ge2$ and that $K$ is of characteristic $2$, but we will not use this. The statement of Proposition~1 in \cite{T} is:

{\it Let $f\in K[G]$ be a  regular function. If the ideal $K[G]f$ is stable under the conjugation action of $\g$ on $K[G]$, then $f$ is $\g$-invariant.}

By Proposition~2 and Lemma~2 in \cite{T} it is sufficient to show that, for the filtration that $K[\Sp_n]$ inherits from $K[\Mat_n]$, the associated graded algebra is an integral domain.
The point is that one can then use a (filtration) degree argument. The result now follows immediately from Theorem~\ref{thm.symplecticideal}(ii).
\end{proof}

\begin{propgl}
The algebras $K[\Sp_n]$, $K[\GSp_n]$ and $K[\SpM_n]$ are unique factorisation domains.
\end{propgl}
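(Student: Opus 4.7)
The plan is to establish the three UFD statements in sequence, deducing the case of $K[\SpM_n]$ from that of $K[\GSp_n]$ via Nagata's criterion, using the prime element $d$ whose primality in $K[\SpM_n]$ is established in Theorem~\ref{thm.symplecticideal}(i).

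First I would handle $K[\Sp_n]$. Since $\Sp_n$ is a simply connected semisimple algebraic group over the algebraically closed field $K$, a classical theorem of Popov gives that its Picard group vanishes. As $\Sp_n$ is a smooth affine variety, $K[\Sp_n]$ is a Noetherian normal domain whose divisor class group coincides with $\mathrm{Pic}(\Sp_n)=0$, and hence it is a UFD. The same argument yields that $K[\GSp_n]$ is a UFD: the group $\GSp_n$ is connected reductive with derived group $[\GSp_n,\GSp_n]=\Sp_n$ simply connected (the scalars are central and drop out of commutators), so Popov's theorem again applies and $\mathrm{Pic}(\GSp_n)=0$.

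For $K[\SpM_n]$ I would invoke Nagata's criterion. By the work of Oehms used in the proof of Theorem~\ref{thm.symplecticideal}(i), $K[\SpM_n]$ is a finitely generated (hence Noetherian) integral domain, and the coefficient of dilation $d$ is a prime element (because $K[\SpM_n]/(d)\cong K[\mc V]$ was shown to be a domain in that proof). Since $\GSp_n$ is the principal open subset of $\SpM_n$ on which $d$ does not vanish, the localisation $K[\SpM_n][d^{-1}]$ is naturally identified with $K[\GSp_n]$, which is a UFD by the previous step. Nagata's criterion then yields that $K[\SpM_n]$ is a UFD as well.

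The main obstacle is essentially bookkeeping: checking the hypotheses of Popov's theorem (only that $\Sp_n$ and $[\GSp_n,\GSp_n]=\Sp_n$ are simply connected semisimple) and of Nagata's criterion (that $K[\SpM_n]$ is a Noetherian domain with $d$ prime), both of which have already been furnished by Section~\ref{s.symplecticideal}. If one wished to avoid citing Popov's theorem for $\GSp_n$, one could instead use the étale isogeny $\mathbb{G}_m\times\Sp_n\to\GSp_n$ with kernel $\mu_2$ together with a short computation showing $H^1(\mu_2,K[\mathbb{G}_m\times\Sp_n]^\times)=0$, but this seems strictly harder than the direct appeal. No essentially new ideas beyond those used in Theorem~\ref{thm.symplecticideal} are required.
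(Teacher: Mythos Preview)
Your argument is correct and follows essentially the same route as the paper: for $K[\Sp_n]$ and $K[\GSp_n]$ the paper invokes \cite[Cor.\ to Thm.~1]{T} (which is exactly the UFD statement for connected reductive groups with simply connected derived group, resting on Popov's result that you cite directly), and for $K[\SpM_n]$ it uses Nagata's Lemma with the prime $d$ and the identification $K[\SpM_n][d^{-1}]=K[\GSp_n]$, precisely as you do.
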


\begin{proof}
Since the derived group $D\GSp_n $ of $\GSp_n$ equals $\Sp_n$ which is simply connected, we have by \cite[Cor. to Thm.~1]{T} that $K[\Sp_n]$ and $K[\GSp_n]$ are UFD's. We have $K[\SpM_n][d^{-1}]=K[\GSp_n]$ which is a UFD. Furthermore $d$ generates a prime ideal in $K[\SpM_n]$ by the proof of Theorem~\ref{thm.symplecticideal}(i). So $K[\SpM_n]$ is a UFD by Nagata's Lemma; see e.g. \cite[Lemma 19.20]{Eis}.
\end{proof}

\section{A double centraliser theorem for the symplectic group}\label{s.doublecentraliser}
We begin by defining a version of the symplectic Brauer algebra. For each integer $r\le s$ we fix $r$ vector symbols $x_1,\ldots,x_r$ and $r$ covector symbols $y_1,\ldots,y_r$.\footnote{elements of $V$ are called {\it vectors} and elements of $V^*$ are called {\it covectors}.} To ease notation we did not give these symbols the extra index $r$ that indicates to which integer $r\le s$ they are associated. In fact one may, as the notation suggests, take the $r$-symbols as the first $r$ of the $s$-symbols. Let $u$ and $v$ be nonnegative integers $\le s$ such that $u\equiv v\ ({\rm mod}\ 2)$, that is, such that $u+v$ is even. A {\it $(u,v)$-diagram} is a matching of the $u+v$ symbols $y_1,\ldots,y_u,x_1,\ldots,x_v$ in pairs. Such a diagram is depicted as a graph whose vertices are arranged in two rows. The top row consists of $u$ vertices representing, from left to right, the $u$ covector symbols $y_1,\ldots,y_u$ and the bottom row consists of $v$ vertices representing the $v$ vector symbols $x_1,\ldots,x_v$. An empty row is represented by $\emptyset$. Two vertices are joined if the corresponding symbols are matched. So for example

\begin{equation}\label{eq.example}
\xymatrix @R=14pt @C=14pt @M=-2pt{
{\bullet}\ar@{-}[1,1]&{\bullet}\ar@{-}@/^.7pc/[0,3]&{\bullet}{\ar@{-}[0,1]}&{\bullet}&{\bullet}\\
{\bullet}{\ar@{-}@/_.5pc/[0,2]}&{\bullet}&{\bullet}
}
\text{\qquad\qquad and\qquad\qquad}
\xymatrix @R=14pt @C=14pt @M=-2pt{
\emptyset\\
{\bullet}{\ar@{-}[0,1]}&{\bullet}
}
\end{equation}

\vspace{.5cm}

\noindent are a $(5,3)$ and a $(0,2)$-diagram.

If an edge $e$ in a $(u,v)$-diagram is horizontal, then its left endpoint is called its {\it initial point} and its right endpoint is called its {\it terminal point}. If $e$ is not horizontal, then its endpoint in the top row is called its initial point and its endpoint in the bottom row is called its terminal point.

Let $t\in k$. We now define the $k$-algebra ${\mf B}_{\sleq s}(t)$. It has all $(u,v)$-diagrams, $0\le u,v\le s$, $u\equiv v\ ({\rm mod}\ 2)$, as a $k$-basis and its dimension is $\sum_{u=1}^s\sum_{v=1}^sN_{uv}$, where

$$N_{uv}=\begin{cases}
(u+v-1)(u+v-3)\cdots3\ncd1=\frac{(u+v)!}{2^{(u+v)/2}((u+v)/2)!}&\text{if $u\equiv v\ ({\rm mod}\ 2)$,}\\
0&\text{otherwise.}
\end{cases}$$

The multiplication of diagrams is defined as follows. Let $D_{uv}$ be a $(u,v)$-diagram and let $D_{u'v'}$ be a $(u',v')$-diagram. Then we define

$$D_{uv}D_{u'v'}=\begin{cases}
\sgn(D_{uv},D_{u'v'})t^{\gamma(U)}D_{uv'}&\text{if $v=u'$,}\\
0&\text{if $v\ne u'$.}
\end{cases}$$

Here, as in \cite{Br} and \cite{HanWal}, $U$ is the graph which is obtained by putting $D_{uv}$ on top of $D_{u'v'}$ and identifying the vertices of the bottom row of $D_{uv}$ with those of the top row of $D_{u'v'}$, $\gamma(U)$ is the number of cycles in $U$ and $D_{uv'}$ is the $(u,v')$-diagram obtained from $U$ by taking the vertices from top and bottom row in $U$ and matching two vertices if there is a path between them in $U$.

Our definition of the sign $\sgn(D_{uv},D_{u'v'})$ of $D_{uv}$ over $D_{u'v'}$ does not agree with \cite[p 411]{HanWal}. The graph $U$ consists of $(u+v')/2$ paths and $\gamma(U)$ cycles that are of even length and consist entirely of edges in the middle row of $U$. A path in $U$ either consists of one horizontal edge in the top or bottom row of $U$ or it has precisely two non-horizontal edges and all its horizontal edges are in the middle row of $U$. A path of even length has one endpoint in the top row of $U$ and the other endpoint in the bottom row of $U$. A path of odd length has its endpoints both in the top row of $U$ or both in the bottom row of $U$.

Every path $P$ has a "natural" orientation. If $P$ has even length, then the endpoint in the top row is the initial point and the endpoint in the bottom row is the terminal point. If $P$ has odd length, then the leftmost endpoint of $P$ is the initial point and the other endpoint is the terminal point. For a path $P$ or an oriented cycle $P$ we denote the number of edges of $P$ in the middle row of $U$ that are traversed from left to right by $p_{lr}$ and the number of edges of $P$ in the middle row of $U$ that are traversed from right to left by $p_{rl}$. For a cycle or a path $P$ we define
$$\sgn(P)=
\begin{cases}
(-1)^{|p_{lr}-p_{rl}|/2}&\text{ if $p_{lr}+p_{rl}$ is even,}\\
(-1)^{|p_{lr}-p_{rl}-1|/2}&\text{ if $p_{lr}+p_{rl}$ is odd.}
\end{cases}$$
Note that for a cycle this does not depend on the orientation. We now define $\sgn(D_{uv},D_{u'v'})$ as the product of the signs of all paths and cycles in $U$.

Below we will define a natural representation of ${\mf B}_{\sleq s}(n)$ which is the motivation for the definition of the multiplication. We will see later that the multiplication of ${\mf B}_{\sleq s}(t)$ is associative.

With each $(u,v)$-diagram $D$ we can associate a $(u+v)$-multilinear function $F(D)$ on $\bigoplus^uV^*\oplus\bigoplus^vV$ as follows. First we observe that the form $\la\ ,\ \ra$ defines an isomorphism $V\cong V^*$, unique up to sign, and therefore a unique symplectic form on $V^*$ which we also denote by $\la\ ,\ \ra$. Furthermore we put $\la y,x\ra=y(x)=-\la x,y\ra$ for $y\in V^*$ and $x\in V$. If $e$ is an edge in $D$ whose initial point has label $z_1$ and whose terminal point has label $z_2$, then we put $\la e\ra=\la z_1,z_2\ra$. Now we define
\begin{equation}\label{eq.F(D)}
F(D)=\prod_{e\in D}\la e\ra.
\end{equation}
For example, for the first diagram $D$ in \eqref{eq.example} we have

$$F(D)=\la y_1,x_2\ra\la y_2,y_5\ra\la y_3,y_4\ra\la x_1,x_3\ra.$$

Using the $k$-vector space isomorphisms

\begin{equation}\label{eq.moduleiso1}
\End_k(T^{\sleq s}(V))\cong\bigoplus_{0\le u,v\le s}\Hom_k(\otimes^vV,\otimes^uV)
\end{equation} and
\begin{equation}\label{eq.moduleiso2}
\Hom_k(\otimes^vV,\otimes^uV)\cong(\otimes^uV)\otimes(\otimes^vV)^*\cong
\big((\otimes^uV^*)\otimes(\otimes^vV)\big)^*\,,
\end{equation}
we can now associate to each $(u,v)$-diagram $D$ an endomorphism $E(D)$ of $T^{\sleq s}(V)$ as follows. We take $E(D)$ to be the endomorphism of $T^{\sleq s}(V)$ corresponding to $F(D)\in\big((\otimes^uV^*)\otimes(\otimes^vV)\big)^*$. Let $(e_1,\ldots,e_n)$ be the standard basis of $V=k^n$. Then we have a dual basis $(e^*_1,\ldots,e^*_n)$ and also bases of the vector spaces $\otimes^rV$. The matrix $M(D)$ of $E(D)\in\Hom_k(\otimes^vV,\otimes^uV)$ with respect to these bases is then given by
\begin{equation}\label{eq.M(D)}
M(D)_{(i_1,\ldots,i_u),(j_1,\ldots,j_v)}=F(D)(e^*_{i_1},\ldots,e^*_{i_u},e_{j_1},\ldots,e_{j_v}).
\end{equation}
The linear map $E:{\mf B}_{\sleq s}(n)\to\End_k(T^{\sleq s}(V))$ given by the assignment $D\mapsto E(D)$ is a homomorphism of algebras. We indicate a proof.

The isomorphism $\varphi:V^*\to V$ with $\la\varphi(y),x\ra=y(x)$ for all $x\in V$ and $y\in V^*$ maps $e^*_i$ to $\e_{i'}e_{i'}$. So we have $$\la e^*_i,e^*_j\ra=\la e_i,e_j\ra=\e_i\delta_{ij'}.$$
Now one easily checks that

$$
\sum_{l_2,\ldots, l_p=1}^n\la e_{l_1},e_{l_2}\ra\la e_{l_2},e_{l_3}\ra\cdots\la e_{l_p},e_{l_{p+1}}\ra=
\begin{cases}
(-1)^{p/2}\la e^*_{l_1},e_{l_{p+1}}\ra\text{\quad\ \ \,if $p$ is even,}\\
(-1)^{(p-1)/2}\la e_{l_1},e_{l_{p+1}}\ra\text{ if $p$ is odd.}
\end{cases}$$
and that for $p$ even
$$
\sum_{l_1,\ldots, l_p=1}^n\la e_{l_1},e_{l_2}\ra\la e_{l_2},e_{l_3}\ra\cdots
\la e_{l_{p-1}},e_{l_p}\ra\la e_{l_p},e_{l_1}\ra=
(-1)^{p/2}\cdot n\,.$$
If we evaluate an entry of the product $M(D_{uv})M(D_{u'v'})$, $v=u'$, then this entry will be a sum over $v$ indices corresponding to the vertices in the middle row of $U$. Here we use \eqref{eq.F(D)} and \eqref{eq.M(D)}. Now we can distribute these $v$ sums over the paths and cycles in $U$. Each path or cycle gets the sums corresponding to the vertices of the middle row of $U$ that it contains. Then we obtain a product of sums of the above two types where we have to swap the arguments of certain scalar products $\la e_{l_{i+1}},e_{l_{i+2}}\ra$ according to position of the corresponding vertices in the middle row of $U$. Here an extra sign $(-1)^{p_{lr}}$ comes in. Now it follows that the entry of $M(D_{uv})M(D_{u'v'})$ is equal to the corresponding entry of $M(D_{uv}D_{u'v'})$.

We now define some special elements of ${\mf B}_{\sleq s}(t)$.
\begin{equation}
c_r=\hspace{.1cm}
\begin{xy}
(-6.4,2.8)*=<47pt,28pt>{
\xymatrix @R=14pt @C=14pt @M=-2pt{
{\bullet}\ar@{-}[1,0]&\cdots&{\bullet}\ar@{-}[1,0]\\
{\bullet}&\cdots&{\bullet}
}
}*\frm{_\}};%
(-6.4,-5.2)*{\text{$r$ vertices}}
\end{xy}
\text{\hspace{.2cm},\hspace{.3cm}}
b_r=\hspace{1.1cm}
\begin{xy}
(-17.5,2.5)*=<47pt,26pt>{
\xymatrix @R=14pt @C=14pt @M=-2pt{
&&{\bullet}\ar@{-}[1,0]&\cdots&{\bullet}\ar@{-}[1,0]\\
{\bullet}\ar@{-}[0,1]&{\bullet}&{\bullet}&\cdots&{\bullet}
}
}*\frm{^\}};%
(-17.5,11)*{\text{$r$ vertices}}
\end{xy}
\text{\hspace{.4cm}and\hspace{.3cm}}
\ov{b}_r=\hspace{1.1cm}
\begin{xy}
(-17.4,2.8)*=<47pt,28pt>{
\xymatrix @R=14pt @C=14pt @M=-2pt{
{\bullet}\ar@{-}[0,1]&{\bullet}&{\bullet}\ar@{-}[1,0]&\cdots&{\bullet}\ar@{-}[1,0]\\
&&{\bullet}&\cdots&{\bullet}
}
}*\frm{_\}};%
(-17.5,-5.2)*{\text{$r$ vertices}}
\end{xy}
\text{\hspace{.2cm}.}
\end{equation}\vspace{.3cm}

Here $c_r$ is defined for $0\le r\le s$ and $b_r$ and $\ov{b}_r$ are defined for $0\le r\le s-2$.
The unit element of ${\mf B}_{\sleq s}(t)$ is $\sum_{r=0}^sc_r$. The subspace $c_r{\mf B}_{\sleq s}(t)c_r$ is the span of all $(r,r)$-diagrams and is closed under multiplication. It has $c_r$ as a unit element and as an algebra it is isomorphic to the symplectic Brauer algebra ${\mf B}_r(t)$ which was introduced for $t=n$ in \cite{Br}. We identify ${\mf B}_r(t)$ with this subspace of ${\mf B}_{\sleq s}(t)$. Then $\bigoplus_{r=0}^s{\mf B}_r(t)$ is identified with a subalgebra with unit of ${\mf B}_{\sleq s}(t)$. We obtain a natural embedding $k\la\Sym_r\ra\subseteq {\mf B}_r(t)$ by assigning to each $\pi\in\Sym_r$ the $(r,r)$-diagram in which $x_i$ is matched with $y_{\pi_i}$. The action of $\Sym_r$ on $\otimes^rV$ that it inherits from ${\mf B}_r(n)$ is the same as the action mentioned in Section~\ref{s.symplecticideal}.

The algebra ${\mf B}_{\sleq s}(t)$ is generated by the elements $b_r$, $\ov{b}_r$, $0\le r\le s-2$, $c_0$, $c_1$ and the elements of each ${\mf B}_r(t)$, $2\le r\le s$, that correspond to the elementary transpositions in $\Sym_r$.

In the theorem below we will consider the representation of ${\mf B}_{\sleq s}(n)$ on $T^{\sleq s}(V)$ as given by the homomorphism $E$ defined above. First we make a preliminary observation. Let $B\in\Hom_k(V\otimes V,k)\subseteq\End_k(T^{\sleq s}(V)$ be the endomorphism that maps $x_1\otimes x_2$ to $\la x_1,x_2\ra$ and let $\ov{B}\in\Hom_k(k,V\otimes V)\subseteq\End_k(T^{\sleq s}(V)$ be the endomorphism that maps $1$ to $\sum_{i=1}^n\e_ie_i\otimes e_{i'}$. Then we have
\begin{equation}\label{eq.B}
E(b_r)=B\otimes\id_{\otimes^rV}\text{\quad and\quad}E(\ov{b}_r)=\ov{B}\otimes\id_{\otimes^rV}.
\end{equation}
Note furthermore that $E(c_r)$ is just the projection of $T^{\sleq s}(V)$ onto $\otimes^rV$.

\begin{thmgl}\label{thm.doublecentraliser}
The following holds.
\begin{enumerate}[(i)]
\item $\End_{\Sp_n}(T^{\sleq s}(V))$ coincides with the image of ${\mf B}_{\sleq s}(n)$ in $\End_k(T^{\sleq s}(V))$.
\item $\End_{{\mf B}_{\sleq s}(n)}(T^{\sleq s}(V))$ is the enveloping algebra of $\Sp_n$ in $\End_k(T^{\sleq s}(V))$.
\item If $m\ge s$, then the homomorphism ${\mf B}_{\sleq s}(n)\to\End_k(T^{\sleq s}(V))$ is injective.
\item If $m<s$, then the homomorphism ${\mf B}_s(n)\to\End_k(\otimes^sV)$ is not injective.
\end{enumerate}
\end{thmgl}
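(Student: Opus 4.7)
My plan is to exhibit a nonzero element of ${\mf B}_s(n)$ annihilated by $E$. There are two subcases, according to whether $s>n$ or not.

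For $s>n=2m$, take the antisymmetrizer $a_s=\sum_{\pi\in\Sym_s}\sgn(\pi)\pi$, embedded in ${\mf B}_s(n)$ via the subalgebra inclusion $k\la\Sym_s\ra\hookrightarrow{\mf B}_s(n)$. It is visibly nonzero (the identity appears with coefficient $1$). On any basis tensor $e_{j_1}\otimes\cdots\otimes e_{j_s}$ the pigeonhole principle gives some $j_a=j_b$; pairing each $\pi\in\Sym_s$ with $\pi\cdot(a\,b)$ then cancels all contributions, since $(a\,b)$ fixes the tensor and $1+\sgn((a\,b))=0$ in any characteristic. Hence $E(a_s)=0$.

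For the harder range $m<s\le n$, I would combine part~(i) of the theorem with a dimension count. Restricting~(i) to the $c_s$--$c_s$ diagonal block yields $E({\mf B}_s(n))=\End_{\Sp_n}(\otimes^sV)$, using ${\mf B}_s(n)=c_s{\mf B}_{\sleq s}(n)c_s$. Over $\mb C$, semisimplicity of $\Sp_n(\mb C)$-modules together with the vanishing of the Weyl modules $Y(\lambda)$ for partitions $\lambda$ with more than $m$ parts gives $\dim\End_{\Sp_n}(\otimes^sV)=\sum_\lambda d_\lambda^2$ summed over partitions $\lambda$ with $|\lambda|\le s$, $|\lambda|\equiv s\pmod 2$ and \emph{at most $m$ parts}, whereas $\dim{\mf B}_s(n)=(2s-1)!!$ equals the analogous sum without the part-number restriction (for $n$ generic enough for semisimplicity of the Brauer algebra; the remaining values of $n$ are handled by specialisation). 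When $m<s$, the partition $(1^{m+1})$ contributes to the latter sum but not the former, giving a strict inequality and hence non-injectivity of $E$ over $\mb C$. Transfer to arbitrary infinite $k$ uses the $\mb Z$-form of ${\mf B}_s(n)$: the diagram basis has integer structure constants (polynomials in $n$ evaluated at a positive integer) and the matrix entries of $E$ on this basis are integers, so a primitive integral element of the $\mb Q$-kernel reduces modulo $\mathrm{char}(k)$ to a nonzero element of the $k$-kernel.

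The main obstacle is making the second case rigorous in positive characteristic, where the dimension count rests on characteristic-zero semisimplicity and a careful integral-form reduction. A more robust alternative would be to construct the kernel element directly: the $\Sp_n$-equivariant form-contraction isomorphism $\Lambda^{m+1}V\cong\Lambda^{m-1}V$ (valid in any characteristic because the Weyl module $Y((1^{m+1}))$ vanishes for $\Sp_{2m}$) lifts to a relation in ${\mf B}_s(n)$ between the composite of antisymmetrization on $m+1$ positions with a cup-diagram and the analogous composite on $m-1$ positions. Verifying by direct inspection of diagrams that this difference is nonzero in ${\mf B}_s(n)$ but zero in $\End_k(\otimes^sV)$ yields a characteristic-free explicit kernel element.
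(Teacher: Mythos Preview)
Your proposal treats only part~(iv); for that part the paper takes a different and much shorter route. It produces a single explicit kernel element valid over the whole range $m<s$ and over any $k$, with no case split and no reduction from characteristic zero. Identify $V^*$ with $V$ via the form and set $z_1,\ldots,z_{2s}=y_1,\ldots,y_s,x_1,\ldots,x_s$. The Pfaffian ${\rm Pf}\big((\la z_i,z_j\ra)_{1\le i,j\le 2s}\big)$ is an alternating $2s$-multilinear function of $z_1,\ldots,z_{2s}\in V$, hence identically zero because $\dim V=n<2s$. Expanding the Pfaffian by its defining formula writes it as a $\pm1$-linear combination of the $(2s-1)!!$ monomials $\prod_i\la z_{\pi_{2i-1}},z_{\pi_{2i}}\ra$, one for each perfect matching of $\{1,\ldots,2s\}$; each such monomial is exactly $F(D)$ for the corresponding $(s,s)$-diagram $D$. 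This is the required nontrivial relation among the $E(D)$. Your antisymmetrizer in the subcase $s>n$ is correct and is a different, smaller kernel element supported only on the $s!$ permutation diagrams; the Pfaffian element subsumes it and covers the remaining range $m<s\le n$ with no further work.

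Your argument in the range $m<s\le n$ has a genuine gap. You compare $\dim\End_{\Sp_n(\mb C)}(\otimes^s V)=\sum_{\ell(\lambda)\le m}d_\lambda^2$ with $(2s-1)!!$, which you identify with ``the analogous sum without the part-number restriction''. Implicit here is that the multiplicity $d_\lambda$ of $Y(\lambda)$ in $\otimes^s V$ is the \emph{same} number whether one works with $\Sp_{2m}$ or with some $\Sp_{2m'}$, $m'\ge s$, for which part~(iii) applies. That stability is true but not obvious; it requires, for instance, the cell structure of the Brauer algebra or a combinatorial (up--down tableau) formula for symplectic multiplicities, neither of which you invoke. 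Without it the strict inequality does not follow. Your alternative via a lift of $\Lambda^{m+1}V\cong\Lambda^{m-1}V$ to a Brauer-algebra relation is only a sketch; writing down the element and checking it is nonzero in ${\mf B}_s(n)$ is still a calculation you have not done. The reduction from $\mb Q$ to arbitrary $k$ via a primitive integral kernel element is correct, but it becomes unnecessary once one has the Pfaffian relation, whose coefficients are already $\pm1$.
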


\begin{proof}
(i).\ We follow Brauer's method as adjusted to our situation by Weyl, see \cite[V.2, p 141-142]{Wey}. There is a natural action of $\Sp_n$ on $\End_k(T^{\sleq s}(V))$ and it is clear that $\End_{\Sp_n}(T^{\sleq s}(V))$ consists of the $\Sp_n$-invariant elements of $\End_k(T^{\sleq s}(V))$. The vector spaces in \eqref{eq.moduleiso1} and \eqref{eq.moduleiso2} all have a natural $\Sp_n$-action and the isomorphisms there are $\Sp_n$-equivariant. It therefore suffices to show that for $u,v\in\{1,\ldots,s\}$, $\big((\otimes^uV^*)\otimes(\otimes^vV)\big)^{*\,\Sp_n}$ is spanned by the multilinear functions $F(D)$, where $D$ is a $(u,v)$-diagram and $F$ is given by \eqref{eq.F(D)}.

Let $u,v\in\{1,\ldots,s\}$ and put $w=u+v$. We will identify $V^*$ with $V$ by means of the isomorphism $\varphi:V^*\stackrel{\sim}{\to}V$. This means that the $y_i$ are now vector variables. We put $z_i=y_i$ for $i\in\{1,\ldots,u\}$ and $z_i=x_{i-u}$ for $i\in\{u+1,\ldots,w\}$. Since $k$ is infinite, $k[\oplus^wV]$ can be identified with the polynomial ring in the components of the $z_i$.
If $f\in k[\oplus^wV]$ is $\Sp_n$-invariant, then it is also $\Sp_n(K)$-invariant as an element of $K\otimes_kk[\oplus^wV]$, since $\Sp_n$ is dense in $\Sp_n(K)$. But then $f$ is a formal invariant in the definition of \cite[\S 2]{DeCProc}, see e.g. \cite[Remark I.2.8]{Jan}. We can now apply the first fundamental theorem of invariant theory for the symplectic group, \cite[Thm.~6.6]{DeCProc}. This gives us that $k[\oplus^wV]^{\Sp_n}$ is generated as a $k$-algebra by the scalar products $\la z_i,z_j\ra$, $1\le i<j\le w$. It follows immediately that $(\otimes^wV)^{*\,\Sp_n}$ is spanned by the monomials in the $\la z_i,z_j\ra$, $1\le i<j\le w$, in which each $z_i$ occurs exactly once. These monomials are precisely the $F(D)$, where $D$ is a $(u,v)$-diagram.\\
(ii).\ We have $k\la\mf S\ra\subseteq{\mf B}_{\sleq s}(n)$, where $\mf S$ is as in Proposition~\ref{prop.envalg}(iv). So, as in the proof of that result, we have that $\End_{{\mf B}_{\sleq s}(n)}(T^{\sleq s}(V))$ consists of $(s+1)$-tuples of bisymmetric substitutions. Using \eqref{eq.B} one easily checks that the condition of commuting with $E(b_r)$ is given by the equation~\eqref{eq.envalgeqs1} and  that \eqref{eq.envalgeqs2} gives the condition of commuting with $E(\ov{b}_r)$. By Theorem~\ref{thm.symplecticideal}(iii) these are precisely the equations that define ${\mc E}^{\sleq s}(\Sp_n)$.

The arguments in the proofs of (iii) and (iv) below are very similar to those for the orthogonal group in \cite[p 149]{Wey}. Recall that the {\it Pfaffian} of an alternating $2r\times 2r$-matrix A is defined by
\begin{equation}\label{eq.pfaff}
{\rm Pf}(A)=\sum_\pi\sgn(\pi)\prod_{i=1}^ra_{\pi_{2i-1},\pi_{2i}}\ ,
\end{equation}
where the sum is over all permutations $\pi$ of $\{1,2,\ldots,2r\}$ with $\pi_1<\pi_3<\cdots<\pi_{2r-1}$ and $\pi_{2i-1}<\pi_{2i}$ for all $i\in\{1,\ldots,r\}$.\\
(iii).\ By the isomorphisms \eqref{eq.moduleiso1} and \eqref{eq.moduleiso2} it is sufficient to show that for each $u,v\in{1,\ldots,s}$ with $u\equiv v\ ({\rm mod}\ 2)$, the multilinear functions $F(D)$, $D$ a $(u,v)$-diagram, are linearly independent. We use the identification $V^*\cong V$ and the notation of (i). As we have seen in (i) the $F(D)$ are the monomials in the $\la z_i,z_j\ra$, $1\le i<j\le w$ in which each $z_i$ occurs exactly once. The second fundamental theorem for the symplectic group, \cite[Thm.~6.7]{DeCProc}, says that the ideal of relations between the $\la z_i,z_j\ra$, $1\le i<j\le w$ is generated by the Pfaffians of the principal submatrices of size $n+1$ of the $w\times w$ alternating matrix $\la z_i,z_j\ra_{1\le i,j\le w}$.\footnote{A principal submatrix is obtained by choosing rows and columns from the same index set.} Since, by assumption, $w\le 2s\le n$, there are no such submatrices, that is, the $\la z_i,z_j\ra$, $1\le i<j\le w$ are algebraically independent. This means that the monomials in the $\la z_i,z_j\ra$, $1\le i<j\le w$ are linearly independent.\\
(iv).\ From \eqref{eq.pfaff} it is clear that ${\rm Pf}(A)$ is a signed sum of distinct monomials in the $a_{ij}$, $1\le i<j\le 2r$, in which each number $i\in\{1,2,\ldots,2r\}$ occurs precisely once either as row or as column index. Since $n<2s$, we must have that the alternating $2s$-multilinear map ${\rm Pf}(\la z_i,z_j\ra_{1\le i,j\le 2s})$ on $\oplus^{2s}V$ is zero. This gives us a nontrivial dependence relation between the multilinear functions $F(D)$, $D$ an $(s,s)$-diagram.
\end{proof}

\begin{cornn}
The algebra ${\mf B}_{\sleq s}(t)$ is associative for any $t\in k$.
\end{cornn}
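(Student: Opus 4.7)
The plan is to prove associativity by a polynomial identity argument, using that Theorem~\ref{thm.doublecentraliser}(iii) exhibits an associative realisation of ${\mf B}_{\sleq s}(n)$ for infinitely many integer values of the parameter.

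First I would introduce a "universal" version of the algebra: let $T$ be an indeterminate and define ${\mf B}_{\sleq s}^{\,{\rm univ}}$ to be the free $\mb{Z}[T]$-module on the basis of all $(u,v)$-diagrams with $0\le u,v\le s$ and $u\equiv v\pmod 2$, equipped with the same multiplication rule as ${\mf B}_{\sleq s}(t)$ but with $T$ replacing $t$. This is well defined as a bilinear product because each structure constant of ${\mf B}_{\sleq s}(t)$ is of the form $\pm t^{\gamma(U)}$ with $\gamma(U)\in\mb{Z}_{\ge 0}$, and these signs and exponents depend only on the combinatorics of the diagrams, not on $t$ or $k$. For any infinite field $k$ and any $t\in k$, the algebra ${\mf B}_{\sleq s}(t)$ is obtained from ${\mf B}_{\sleq s}^{\,{\rm univ}}$ by the ring homomorphism $\mb{Z}[T]\to k$, $T\mapsto t$.

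Next, for each fixed ordered triple $(D_1,D_2,D_3)$ of basis diagrams, both $(D_1D_2)D_3$ and $D_1(D_2D_3)$ are finite $\mb{Z}[T]$-linear combinations of basis diagrams in ${\mf B}_{\sleq s}^{\,{\rm univ}}$. Their difference therefore takes the form $\sum_D p_{D_1,D_2,D_3,D}(T)\,D$ with $p_{D_1,D_2,D_3,D}(T)\in\mb{Z}[T]$. Associativity of ${\mf B}_{\sleq s}^{\,{\rm univ}}$ is equivalent to the vanishing of all of these polynomials in $\mb{Z}[T]$.

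Now I would verify that each such polynomial has infinitely many roots. For each integer $m\ge s$ set $n=2m$ and take $k=\mb{Q}$ in the construction of the paper. Then Theorem~\ref{thm.doublecentraliser}(iii) asserts that the homomorphism $E:{\mf B}_{\sleq s}(n)\to\End_\mb{Q}(T^{\sleq s}(\mb{Q}^n))$ is injective. Since $\End_\mb{Q}(T^{\sleq s}(\mb{Q}^n))$ is associative and $E$ is an algebra homomorphism (as shown just before the definition of the $c_r,b_r,\ov b_r$), the algebra ${\mf B}_{\sleq s}(n)$ is associative over $\mb{Q}$. Specialising our universal identities at $T=n=2m$ and tensoring with $\mb{Q}$ therefore yields zero, so every polynomial $p_{D_1,D_2,D_3,D}(T)$ vanishes at $T=2m$ for every $m\ge s$. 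Hence each $p_{D_1,D_2,D_3,D}$ has infinitely many roots in $\mb{Z}$ and must be identically zero in $\mb{Z}[T]$.

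Finally, associativity in ${\mf B}_{\sleq s}^{\,{\rm univ}}$ passes to any specialisation: for arbitrary $k$ and $t\in k$, applying the ring map $\mb{Z}[T]\to k$, $T\mapsto t$, to the identity $(D_1D_2)D_3=D_1(D_2D_3)$ gives the corresponding identity in ${\mf B}_{\sleq s}(t)$. By bilinearity this extends to arbitrary elements, and associativity follows. I do not foresee a real obstacle here; the only point requiring a little care is the reduction to a polynomial identity over $\mb{Z}[T]$, which works precisely because every structure constant is an integer times a power of $t$ and is independent of $k$.
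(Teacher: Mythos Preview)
Your argument is correct and is essentially identical to the paper's own proof: both pass to a universal $\mb Z[t]$-form of ${\mf B}_{\sleq s}$, observe that the associator coefficients are integer polynomials in $t$, and use Theorem~\ref{thm.doublecentraliser}(iii) over $\mb Q$ to see these polynomials vanish at all even integers $\ge 2s$, hence identically. You have merely spelled out a few steps (the base-change reduction and the role of injectivity of $E$) that the paper leaves implicit.
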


\begin{proof}
First we observe that we can define the Brauer algebra over any commutative ring $R$ and then we have
${\mf B}_{\sleq s}(R,t_0)\cong R\otimes_{\mb Z[t]}{\mf B}_{\sleq s}(\mb Z[t],t)$ for all $t_0\in R$, where we now consider $t$ as an indeterminate and the homomorphism $\mb Z[t]\to R$ is given by $t\mapsto t_0$. So it suffices to show that ${\mf B}_{\sleq s}(\mb Z[t],t)$ is associative. If $D,D',D''\in{\mf B}_{\sleq s}(\mb Z[t],t)$, then the coefficients of $(DD')D''-D(D'D'')$ with respect to the diagram basis are polynomials in $t$. By Theorem~\ref{thm.doublecentraliser}(iii) applied with $k=\mb Q$, these polynomials vanish whenever we specialise $t$ to an even integer $\ge 2s$. It follows that these polynomials are identically zero.
\end{proof}

\begin{remsgl}
1.\ One can also derive the double centraliser theorem for $\Sp_n$ acting on $\otimes^sV$ as otained in \cite[Prop.~1.3 and Thm~1.4]{Dotyetal} by our method. Simply combine Remark~\ref{rems.defeqs}.2 and the arguments of the proof of Theorem~\ref{thm.doublecentraliser}.\\
2.\ Drop the assumption that $n$ is even and assume that the characteristic of $k$ is not $2$. Orthogonal versions ${\mf A}_{\sleq s}(t)$ and ${\mf A}_s(t)$ of ${\mf B}_{\sleq s}(t)$ and ${\mf B}_s(t)$ can also be defined. One just has to omit the sign in the definition of the multiplication. The algebra ${\mf A}_s(t)$ coincides with the orthogonal Brauer algebra defined in \cite{HanWal}. Furthermore the algebas ${\mf A}_{\sleq s}(n)$ and ${\mf A}_s(n)$ have natural representations in $T^{\sleq s}(V)$ and $\otimes^s V$.

Let $I$ be the orthogonal ideal. In characteristic zero homogeneous generators for $I_{hom}$ are given in \cite[Thm.~10.5a)]{Doty} and generators for $I$ with property \eqref{eq.idealgensprop} are given in \cite[Thm.~5.2.C]{Wey}. The Corollary to Proposition~\ref{prop.envalg} and the proof of Theorem~\ref{thm.doublecentraliser} show that, to prove the double centraliser theorem over $k$ for the orthogonal group acting on $\otimes^s V$ and $T^{\sleq s}(V)$, it is enough to show that these elements are also generators of $I_{hom}$ and generators for $I$ with property \eqref{eq.idealgensprop} over $k$.\\
3.\ Assume that $k=\mb C$. Then it can be shown by easy invariant theoretic arguments as in the proof of Theorem~\ref{thm.doublecentraliser}(iii) and (iv) that the natural representation ${\mf A}_s(n)\to\End_k(\otimes^sV)$ is injective if $n\ge 2s$ and not injective if $n<s$; see \cite[p 149]{Wey}. So, contrary to the symplectic case, these arguments do not give a complete criterion for faithfulness. In \cite{Bro} it is proved that the natural representation of ${\mf A}_s(n)$ is faithful if and only if $n\ge s$.\\
4.\ From \cite[Thm.'s 2.11, 4.6, 4.3]{MoWa} it follows that ${\mf A}_s(t)$ has the well-known presentation given in e.g. \cite[p 3]{Dotyetal}. From this one easily deduces a presentation for ${\mf B}_s(t)$ and an isomorphism ${\mf B}_s(t)\cong {\mf A}_s(-t)$. This is in accordance with the results in \cite{HanWal} and \cite{Wen}.
\end{remsgl}

\noindent{\it Acknowledgement}. I would like to thank S.\ Oehms for his comments on a preliminary version of this paper and A.\ M.\ Cohen for bringing \cite{MoWa} to my attention. This research was funded by the EPSRC Grant EP/C542150/1.

\end{document}